\numberwithin{equation}{section}
\newcommand{\Z}{{\mathbb Z}}
\newtheorem{thm}{Theorem}[section]
\newtheorem{lemma}[thm]{Lemma}
\newtheorem{cor}[thm]{Corollary}
\title{Sums of fourth powers of Fibonacci and Lucas numbers\thanks{AMS Classification Numbers : 11B37, 11B39}\vspace{10mm}}
\author[]{Kunle Adegoke \thanks{adegoke00@gmail.com, kunle.adegoke@yandex.com}}
\affil{Department of Physics and Engineering Physics, \mbox{Obafemi Awolowo University}, Ile-Ife, Nigeria}
\begin{document}

\date{}

\maketitle

\begin{abstract}
\noindent We obtain closed-form  expressions for all sums of the  form \mbox{$\sum_{k = 1}^n {F_{mk}{}^4 }$} and \mbox{$\sum_{k = 1}^n {L_{mk}{}^4 }$} and their alternating versions, where $F_i$ and $L_i$ denote Fibonacci and Lucas numbers respectively. Our results complement those of Melham who studied the alternating sums.
\end{abstract}

\section{Introduction}

The Fibonacci numbers, $F_n$, and Lucas numbers, $L_n$, are defined, for \mbox{$n\in\Z$}, as usual, through the recurrence relations \mbox{$F_n=F_{n-1}+F_{n-2}$}, \mbox{$F_0=0$, $F_1=1$} and \mbox{$L_n=L_{n-1}+L_{n-2}$}, $L_0=2$, $L_1=1$, with $F_{-n}=(-1)^{n-1}F_n$ and $L_{-n}=(-1)^nL_n$.

\bigskip

About two decades ago, motivated by the results of Clary and Hemenway~\cite{clary} who obtained factored closed-form expressions for sums of the form \mbox{$\sum_{k = 1}^n {F_{mk}{}^3 }$},  Melham~\cite{melham2000} obtained factored closed-form expressions for alternating sums of the form \mbox{$\sum_{k = 1}^n {(-1)^{k-1}F_{mk}{}^4 }$}.

\bigskip

Since no evaluations were reported in the Melham paper for the non alternating sums, we have attempted to fill that gap in this paper. Our main results are the following, valid for integers $m$ and $n$, with $m$ not equal to zero:
\[
25\sum_{k = 1}^n {F_{mk}{}^4 }  = \frac{{F_{2mn+m} (L_{2mn + m}  + 4( - 1)^{mn - 1} L_m )}}{{F_{2m} }} + 6n+3
\]
and
\[
\sum_{k = 1}^n {L_{mk}{}^4 }  = \frac{{F_{2mn+m} (L_{2mn + m}  + 4( - 1)^{mn} L_m )}}{{F_{2m} }} + 6n-5\,.
\]
We also re-derived the alternating sums, in slightly different but equivalent forms to the results contained in~\cite{melham2000}:
\[
\sum_{k = 1}^n {( - 1)^{k - 1} F_{mk}{}^4 }  = \frac{{F_{mn} F_{mn + m} \left\{ (-1)^{n-1}L_m L_{mn} L_{mn + m}  + (-1)^{n(m-1)}4L_{2m} \right\}}}{{5L_m L_{2m} }}
\]
and
\[
\sum_{k = (1+(-1)^n)/2}^n {( - 1)^{k - 1} L_{mk}{}^4 }  = \frac{{( - 1)^{n - 1}5F_{mn} F_{mn + m} \left\{ L_m L_{mn} L_{mn + m}  + ( - 1)^{nm}4L_{2m} \right\}}}{{L_m L_{2m} }}\,,
\]
valid for all integers $m$ and $n$.
\section{Required identities and preliminary results}

\subsection{Telescoping summation identities}
The following telescoping summation identities are special cases of the more general identities proved in~\cite{adegoke}.
\begin{lemma}\label{finall}
If $f(k)$ is a real sequence and $m$ and $n$ are positive integers, then
\[
\sum_{k = 1}^n {\left[ {f(mk + m) - f(mk)} \right]}  = f(mn+m)  -f(m)\,. 
\]

\end{lemma}

\begin{lemma}\label{finqodd}
If $f(k)$ is a real sequence and $m$ and $n$ are positive integers, then
\[
\begin{split}
&\sum_{k = 1}^n {( - 1)^{k - 1} \left[ {f(mk + m) + f(mk)} \right]} \\
&\quad = ( - 1)^{n-1} f(mn+m)+f(m)\,. 
\end{split}
\]
\end{lemma}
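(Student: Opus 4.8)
The plan is to reduce the alternating ``plus'' sum to a genuine telescoping sum and then collapse it, in the same spirit as the ordinary telescoping identity of Lemma~\ref{finall}. Writing $g(k)=f(mk)$ for brevity, the assertion of Lemma~\ref{finqodd} becomes
\[
\sum_{k=1}^n (-1)^{k-1}\bigl[g(k+1)+g(k)\bigr] = (-1)^{n-1} g(n+1) + g(1),
\]
and it suffices to establish this, since substituting back $g(k)=f(mk)$ gives $g(n+1)=f(mn+m)$ and $g(1)=f(m)$, which is the stated right-hand side.

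First I would split the sum as $\sum_{k=1}^n (-1)^{k-1} g(k+1) + \sum_{k=1}^n (-1)^{k-1} g(k)$ and reindex the first piece by $j=k+1$. The sign transforms as $(-1)^{k-1}=(-1)^{j-2}=(-1)^{j}=-(-1)^{j-1}$, so the first sum equals $-\sum_{j=2}^{n+1}(-1)^{j-1} g(j)$. Adding this to the second (unshifted) sum, every term whose index lies between $2$ and $n$ cancels against its counterpart, and one is left only with the boundary contributions: $-(-1)^{n} g(n+1)$ from the top of the shifted sum and $g(1)$ from the bottom of the unshifted sum. Since $-(-1)^n=(-1)^{n+1}=(-1)^{n-1}$, this is precisely $(-1)^{n-1} g(n+1)+g(1)$, as required.

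Alternatively — and this may be cleaner to write up — one can argue by induction on $n$. The base case $n=1$ reads $f(2m)+f(m)=f(2m)+f(m)$. For the inductive step, add the term $(-1)^{n}\bigl[f(mn+2m)+f(mn+m)\bigr]$ to both sides of the identity for $n$; on the right the two occurrences of $f(mn+m)$ carry the opposite signs $(-1)^{n-1}$ and $(-1)^{n}$ and hence cancel, leaving $(-1)^{n} f(mn+2m)+f(m)$, which is exactly the asserted right-hand side for $n+1$.

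There is no genuine obstacle here; the only point requiring care is the bookkeeping of the alternating sign across the index shift, in particular the identity $(-1)^{n+1}=(-1)^{n-1}$ that puts the surviving boundary term into the stated form. (As the authors note, the result is in any case a specialization of a more general summation identity in~\cite{adegoke}.)
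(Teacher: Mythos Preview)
Your proof is correct. Both arguments you give --- the reindexing/cancellation and the induction --- are valid, and the bookkeeping with the alternating sign is handled properly.

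As for comparison with the paper: there is essentially nothing to compare against. The paper does not supply its own proof of this lemma; it simply states that Lemmas~\ref{finall} and~\ref{finqodd} are special cases of more general telescoping identities established in~\cite{adegoke}, and moves on. Your write-up therefore provides strictly more than the paper does, namely a self-contained elementary argument (in fact two) rather than a bare citation. You already noted this at the end of your proposal, so you are aware of the situation.
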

\subsection{First-order Lucas summation identities}
\begin{lemma}\label{thm.hnf079l}
If $m$ and $n$ are integers, then
\[
F_m \sum_{k = 1}^n {( - 1)^{mk - 1} L_{2mk} }  = ( - 1)^{mn - 1} F_{mn} L_{mn + m}\,.
\]

\end{lemma}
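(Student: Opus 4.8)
The plan is to convert the summand into a telescoping first difference and then invoke Lemma~\ref{finall}. The one ingredient needed is the product--to--sum identity (immediate from the Binet forms)
\[
F_m L_{2mk} = F_{m(2k+1)} + (-1)^{m-1} F_{m(2k-1)},
\]
which is $F_aL_b = F_{a+b} + (-1)^b F_{a-b}$ taken with $a=m$, $b=2mk$, combined with $F_{-m}=(-1)^{m-1}F_m$. Multiplying through by $(-1)^{mk-1}$ and regrouping the signs ($(-1)^{mk-1}=-(-1)^{mk}$ and $(-1)^{mk+m-2}=(-1)^{m(k+1)}$), one checks directly that
\[
(-1)^{mk-1} F_m L_{2mk} \;=\; -(-1)^m\bigl[\,h(k+1) - h(k)\,\bigr],\qquad h(k):=(-1)^{mk}F_{m(2k-1)}.
\]
The reason for packaging it this way is that $h(k+1)-h(k)$ is an \emph{ordinary} first difference, so the argument proceeds uniformly whether $m$ is even or odd; when $m$ is odd, the alternation of the original sum is simply absorbed into $h$.

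Summing from $k=1$ to $n$, I would take $f(j):=(-1)^{j}F_{2j-m}$, so that $f(mk)=h(k)$, $f(mk+m)=h(k+1)$, and apply Lemma~\ref{finall}:
\[
F_m\sum_{k=1}^n (-1)^{mk-1} L_{2mk} \;=\; -(-1)^m\bigl[\,h(n+1)-h(1)\,\bigr] \;=\; -(-1)^m\Bigl[(-1)^{m(n+1)}F_{2mn+m} - (-1)^{m}F_m\Bigr].
\]
Collecting the powers of $-1$ (using $(-1)^{2m}=1$), this reduces to $F_m - (-1)^{mn}F_{2mn+m}$.

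It then remains to recognise this as the claimed right-hand side. Applying the same product--to--sum identity once more, now as $F_{mn}L_{mn+m}=F_{2mn+m}-(-1)^{mn}F_m$ (from $F_aL_b=F_{a+b}+(-1)^bF_{a-b}$ with $a=mn$, $b=mn+m$, and $F_{-m}=(-1)^{m-1}F_m$), gives
\[
F_m - (-1)^{mn}F_{2mn+m} \;=\; -(-1)^{mn}\bigl(F_{2mn+m}-(-1)^{mn}F_m\bigr) \;=\; (-1)^{mn-1}F_{mn}L_{mn+m},
\]
which is exactly the statement for positive $m$ and $n$; the cases $n=0$ and $m<0$ or $n<0$ follow from the reflection rules $F_{-n}=(-1)^{n-1}F_n$, $L_{-n}=(-1)^nL_n$, under which both sides transform consistently. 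I expect no conceptual difficulty: the whole proof is this one telescoping step, and the only thing to be careful about is the bookkeeping of the signs $(-1)^{mk}$, $(-1)^m$ and $(-1)^{mn}$ — in particular, arranging the difference so that the plain-difference Lemma~\ref{finall} applies regardless of the parity of $m$.
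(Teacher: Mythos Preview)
Your argument is correct. It rests on the same product--to--sum identity $F_vL_u=F_{u+v}-(-1)^vF_{u-v}$ and the same telescoping mechanism as the paper, so the underlying idea is identical. The difference is organisational: the paper splits on the parity of $m$, using Lemma~\ref{finall} when $m$ is even (so that $F_{2mk+m}-F_{2mk-m}=F_mL_{2mk}$ telescopes directly) and Lemma~\ref{finqodd} when $m$ is odd (so that $F_{2mk+m}+F_{2mk-m}=F_mL_{2mk}$ becomes an alternating telescoping sum). You instead fold the factor $(-1)^{mk}$ into the auxiliary function $h(k)=(-1)^{mk}F_{m(2k-1)}$, which turns both cases into a plain first difference and lets Lemma~\ref{finall} alone do the work. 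The payoff is a single, parity-free computation; the paper's version keeps the two cases visibly parallel to the two telescoping lemmas it has set up. Either way the content is the same one-line telescoping identity followed by one application of $F_{mn}L_{mn+m}=F_{2mn+m}-(-1)^{mn}F_m$.
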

\begin{proof}
Setting $v=m$ and $u=2mk$ in the identity
\begin{equation}\label{equ.yb05ue2}
F_{u + v} - (-1)^vF_{u-v}=F_vL_u\,, 
\end{equation}
gives
\begin{equation}\label{equ.gimtkiy}
F_{2mk + m}  - F_{2mk - m}  = F_m L_{2mk}\,,\quad\mbox{$m$ even}\,, 
\end{equation}
and
\begin{equation}\label{equ.nmx8lu8}
F_{2mk + m}  + F_{2mk - m}  = F_m L_{2mk}\,,\quad\mbox{$m$ odd}\,. 
\end{equation}
Using identity~\eqref{equ.gimtkiy} in Lemma~\ref{finall} with $f(k)=F_{2k-m}$, it is established that
\begin{equation}\label{equ.s1xp2nb}
\begin{split}
F_m \sum_{k = 1}^n {L_{2mk} }  &= F_{m + 2mn}  - F_m\\ 
 &= F_{m + mn + mn}  - F_{m + mn - mn}\\ 
 &= F_{mn} L_{mn + m}\,,\quad\mbox{$m$ even}\,, 
\end{split}
\end{equation}
on account of identity~\eqref{equ.yb05ue2}. 

\bigskip

Similarly, using identity~\eqref{equ.nmx8lu8} in Lemma~\ref{finqodd} with $f(k)=F_{2k-m}$, we have
\begin{equation}\label{equ.h7znph9}
\begin{split}
F_m \sum_{k = 1}^n {( - 1)^{k - 1} L_{2mk} }  &= ( - 1)^{n - 1} F_{m + 2mn}  + F_m\\ 
 &= ( - 1)^{n - 1} \left( {F_{m + mn + mn}  - ( - 1)^n F_{m + mn - mn} } \right)\\
 &= ( - 1)^{n - 1} \left( {F_{m + mn + mn}  - ( - 1)^{mn} F_{m + mn - mn} } \right)\,,\quad\mbox{since $m$ is odd}\\
 &= ( - 1)^{n - 1} F_{mn} L_{mn + m}\,,\quad\mbox{$m$ odd}\,. 
\end{split}
\end{equation}
Identities~\eqref{equ.s1xp2nb} and \eqref{equ.h7znph9} combine to give Lemma~\eqref{thm.hnf079l}.
\end{proof}
\begin{lemma}\label{thm.njcpb71}
If $m$ and $n$ are integers, then
\[
L_m \sum_{k = 1}^n {( - 1)^{k(m - 1)} L_{2mk} }  = ( - 1)^{n(m - 1)} L_{2mn+m}  - L_m\,. 
\]

\end{lemma}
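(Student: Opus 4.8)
The plan is to mirror the structure of the proof of Lemma~\ref{thm.hnf079l}, splitting into the cases $m$ even and $m$ odd. When $m$ is even the exponent $k(m-1)$ has the parity of $k$, so the sum is genuinely alternating; when $m$ is odd the factor $(-1)^{k(m-1)}$ is identically $1$ and the sum is a plain sum. In either case I would produce a telescoping identity for $L_{2mk}$ by expressing it as a difference (or a $(-1)$-weighted sum, as appropriate) of consecutive terms of a suitable Lucas-type sequence, then invoke Lemma~\ref{finall} or Lemma~\ref{finqodd}.

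Concretely, the companion of identity~\eqref{equ.yb05ue2} that one needs is $L_{u+v}-(-1)^vL_{u-v}=L_vL_u$ (and the other pairing $L_{u+v}+(-1)^vL_{u-v}=5F_vF_u$, though it is the first that matches the left-hand factor $L_m$ here). Setting $v=m$, $u=2mk$ gives $L_{2mk+m}-L_{2mk-m}=L_mL_{2mk}$ for $m$ even and $L_{2mk+m}+L_{2mk-m}=L_mL_{2mk}$ for $m$ odd. First I would take $m$ even: apply Lemma~\ref{finall} with $f(k)=L_{2k-m}$ to get $L_m\sum_{k=1}^n L_{2mk}=L_{2mn+m}-L_m$, which is exactly the claimed formula since $(-1)^{n(m-1)}=1$ when $m$ is even. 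Next, for $m$ odd, apply Lemma~\ref{finqodd} with the same $f$ to obtain $L_m\sum_{k=1}^n(-1)^{k-1}L_{2mk}=(-1)^{n-1}L_{2mn+m}+L_m$; I then need to reconcile the signs, noting that for $m$ odd we have $(-1)^{k-1}=(-1)^{k(m-1)}$ (both equal $(-1)^{k-1}$ since $m-1$ is even — wait, that gives $+1$; more carefully $(-1)^{k(m-1)}=1$ for $m$ odd, so the left side is the plain sum $L_m\sum L_{2mk}$, and I would instead use Lemma~\ref{finall} here too). This is the one genuinely fiddly point: getting the parity bookkeeping right so that the single closed form $(-1)^{n(m-1)}L_{2mn+m}-L_m$ covers both cases. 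The cleanest route is probably to observe that $(-1)^{n(m-1)}=(-1)^{n}$ or $1$ according to the parity of $m$, handle each case with the matching telescoping lemma, and check the two resulting right-hand sides coincide with the stated uniform expression.

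The main obstacle I anticipate is purely organizational rather than deep: ensuring the choice between Lemma~\ref{finall} (difference form) and Lemma~\ref{finqodd} (alternating-sum form) is made consistently with the parity of $m$, and verifying that the constant term $-L_m$ and the sign on $L_{2mn+m}$ emerge correctly in both branches. There is also a minor subtlety in whether the problem intends the summand's sign $(-1)^{k(m-1)}$ to interact with a telescoping of a $(-1)$-weighted difference versus an unweighted one; I would resolve this by writing out the first few terms of each sum as a sanity check before committing to the general manipulation. No step should require anything beyond the identities $L_{u+v}\pm(-1)^vL_{u-v}=\{L_vL_u \text{ or } 5F_vF_u\}$ together with the two telescoping lemmas already available.
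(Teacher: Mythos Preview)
Your overall strategy is exactly the paper's: split on the parity of $m$, use the Lucas product identity with $u=2mk$, $v=m$, and telescope via Lemma~\ref{finall} or Lemma~\ref{finqodd} as appropriate. However, the execution contains a concrete sign error that propagates. The identity you quote, $L_{u+v}-(-1)^vL_{u-v}=L_vL_u$, is not correct; the paper's identity~\eqref{equ.ldun001} reads $L_{u+v}+(-1)^vL_{u-v}=L_vL_u$ (while $L_{u+v}-(-1)^vL_{u-v}=5F_uF_v$). Consequently your two derived recurrences are swapped: in fact $L_{2mk+m}-L_{2mk-m}=L_mL_{2mk}$ holds for $m$ \emph{odd} and $L_{2mk+m}+L_{2mk-m}=L_mL_{2mk}$ for $m$ \emph{even}, the reverse of what you write. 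This means Lemma~\ref{finall} is the tool for $m$ odd (giving the plain sum, consistent with $(-1)^{k(m-1)}\equiv 1$) and Lemma~\ref{finqodd} is the tool for $m$ even (giving the alternating sum), again the reverse of your first pass.

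There is also a parity slip: you assert ``$(-1)^{n(m-1)}=1$ when $m$ is even,'' but for $m$ even $m-1$ is odd and $(-1)^{n(m-1)}=(-1)^n$. You actually state this correctly a few lines later (``$(-1)^{n(m-1)}=(-1)^{n}$ or $1$ according to the parity of $m$''), so your proposal is internally inconsistent. Once you fix the sign in the Lucas identity and swap which telescoping lemma handles which parity, the two branches yield precisely $L_m\sum_{k=1}^n L_{2mk}=L_{2mn+m}-L_m$ for $m$ odd and $L_m\sum_{k=1}^n(-1)^{k-1}L_{2mk}=(-1)^{n-1}L_{2mn+m}+L_m$ for $m$ even, which combine into the stated formula exactly as the paper does.
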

\begin{proof}
Setting $v=m$ and $u=2mk$ in the identity
\begin{equation}\label{equ.ldun001}
L_{u + v} + (-1)^vL_{u-v}=L_vL_u\,, 
\end{equation}
gives
\begin{equation}\label{equ.efp7baw}
L_{2mk + m}  - L_{2mk - m}  = L_m L_{2mk}\,,\quad\mbox{$m$ odd}\,, 
\end{equation}
and
\begin{equation}\label{equ.fwf5y6v}
L_{2mk + m}  + L_{2mk - m}  = L_m L_{2mk}\,,\quad\mbox{$m$ even}\,. 
\end{equation}
Using~\eqref{equ.efp7baw} in Lemma~\ref{finall} with $f(k)=L_{2k-m}$, we have
\begin{equation}\label{equ.wrfqa3e}
L_m \sum_{k = 1}^n {L_{2mk} } = L_{m + 2mn}  - L_m\,,\quad\mbox{$m$ odd}\,. 
\end{equation}
Similarly, using~\eqref{equ.fwf5y6v} in Lemma~\ref{finqodd} with $f(k)=L_{2k-m}$, we have
\begin{equation}\label{equ.qvew6g0}
L_m \sum_{k = 1}^n {( - 1)^{k - 1} L_{2mk} }  = ( - 1)^{n - 1} L_{m + 2mn}  + L_m\quad\mbox{$m$ even}\,. 
\end{equation}
Identities~\eqref{equ.wrfqa3e} and \eqref{equ.qvew6g0} combine to give Lemma~\eqref{thm.njcpb71}.
\end{proof}

\section{Main results}
\subsection{Non alternating sums}
\begin{thm}\label{thm.rue8com}
If $m$ is a non-zero integer and $n$ is any integer, then
\[
25\sum_{k = 1}^n {F_{mk}{}^4 }  = \frac{{F_{2mn+m} (L_{2mn + m}  + 4( - 1)^{mn - 1} L_m )}}{{F_{2m} }} + 6n+3\,.
\]
\end{thm}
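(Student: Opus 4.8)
The plan is to linearize $F_{mk}^{4}$ into a combination of single Lucas numbers with doubled and quadrupled arguments, and then evaluate the two resulting sums with the first-order summation machinery of Section~\ref{Required identities and preliminary results}. Concretely, starting from the elementary identities $5F_j^{2}=L_{2j}-2(-1)^{j}$ and $L_j^{2}=L_{2j}+2(-1)^{j}$, squaring the first and substituting the second gives, with $j=mk$,
\[
25F_{mk}^{4}=\bigl(L_{2mk}-2(-1)^{mk}\bigr)^{2}=L_{2mk}^{2}-4(-1)^{mk}L_{2mk}+4=L_{4mk}-4(-1)^{mk}L_{2mk}+6,
\]
so that, after summing over $k=1,\dots,n$,
\[
25\sum_{k=1}^{n}F_{mk}^{4}=\sum_{k=1}^{n}L_{4mk}-4\sum_{k=1}^{n}(-1)^{mk}L_{2mk}+6n.
\]

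Next I would dispose of the two sums separately. For $\sum_{k=1}^{n}L_{4mk}$, I put $u=4mk$, $v=2m$ in~\eqref{equ.yb05ue2} to get $F_{4mk+2m}-F_{4mk-2m}=F_{2m}L_{4mk}$, and then apply Lemma~\ref{finall} with $f(k)=F_{2k-2m}$ and $2m$ in place of $m$; the telescoping yields
\[
\sum_{k=1}^{n}L_{4mk}=\frac{F_{4mn+2m}-F_{2m}}{F_{2m}}=\frac{F_{4mn+2m}}{F_{2m}}-1.
\]
For $\sum_{k=1}^{n}(-1)^{mk}L_{2mk}$, I note that $(-1)^{mk-1}=-(-1)^{mk}$ and quote Lemma~\ref{thm.hnf079l} directly, obtaining
\[
\sum_{k=1}^{n}(-1)^{mk}L_{2mk}=\frac{(-1)^{mn}F_{mn}L_{mn+m}}{F_{m}}.
\]
Substituting these two evaluations gives
\[
25\sum_{k=1}^{n}F_{mk}^{4}=\frac{F_{4mn+2m}}{F_{2m}}-\frac{4(-1)^{mn}F_{mn}L_{mn+m}}{F_{m}}+6n-1.
\]

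It then remains to bring this into the advertised shape. Using $F_{2j}=F_{j}L_{j}$ I rewrite $F_{4mn+2m}=F_{2mn+m}L_{2mn+m}$, and using $F_{2m}=F_{m}L_{m}$ I rewrite $4(-1)^{mn-1}L_{m}F_{2mn+m}/F_{2m}=4(-1)^{mn-1}F_{2mn+m}/F_{m}$; the only substantive identity needed is
\[
F_{2mn+m}-F_{mn}L_{mn+m}=(-1)^{mn}F_{m},
\]
which follows from $F_{a}L_{b}=F_{a+b}+(-1)^{b}F_{a-b}$ with $a=mn$, $b=mn+m$, together with $F_{-m}=(-1)^{m-1}F_{m}$. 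Plugging this in turns the displayed right-hand side into $F_{2mn+m}\bigl(L_{2mn+m}+4(-1)^{mn-1}L_{m}\bigr)/F_{2m}+6n+3$, which is the claimed formula.

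The part I expect to need the most care is the range of validity: Lemma~\ref{finall} is stated for positive integers $m$ and $n$, whereas the theorem asserts the identity for every nonzero integer $m$ and every integer $n$. The telescoping used for $\sum_{k=1}^{n}L_{4mk}$ in fact goes through for arbitrary integer endpoints (with the conventions $\sum_{k=1}^{0}=0$ and $\sum_{k=1}^{n}=-\sum_{k=n+1}^{0}$ for $n<0$), and Lemma~\ref{thm.hnf079l} is already stated for all integers, so I would either re-run that telescoping without the positivity hypothesis or verify the cases $n\le 0$ and $m<0$ directly; in all cases the divisions by $F_{m}$ and $F_{2m}$ are legitimate precisely because $m\ne 0$.
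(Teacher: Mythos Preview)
Your proof is correct and follows essentially the same route as the paper: the same linearization $25F_{mk}^{4}=L_{4mk}+4(-1)^{mk-1}L_{2mk}+6$, summation of each Lucas term via the first-order machinery of Section~2, and the same algebraic clean-up using $F_{2j}=F_jL_j$ and the product identity $F_aL_b=F_{a+b}+(-1)^{b}F_{a-b}$. The only cosmetic difference is that for $\sum_{k=1}^{n}L_{4mk}$ the paper invokes Lemma~\ref{thm.hnf079l} with $m$ replaced by $2m$ (obtaining $F_{2mn}L_{2mn+2m}/F_{2m}$), whereas you redo the underlying telescoping directly to get $(F_{4mn+2m}-F_{2m})/F_{2m}$; these are of course equal, and your closing remark about extending the telescoping lemmas beyond positive $m,n$ is a point the paper leaves implicit.
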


\begin{proof}
By squaring the identity 
\begin{equation}\label{equ.f9c5ppy}
5F_u{}^2=L_{2u}-(-1)^u2\,,\quad u\in\Z\,,
\end{equation}
and making use of the identity
\begin{equation}\label{equ.ewdkjs3}
L_v{}^2=L_{2v}+(-1)^v2\,,\quad v\in\Z\,,
\end{equation}
and finally setting $u=mk$, it is established that
\begin{equation}\label{equ.v1jp3hh}
25F_{mk}{}^4=L_{4mk}+(-1)^{mk-1}4L_{2mk}+6\,.
\end{equation}
By summing both sides of identity~\eqref{equ.v1jp3hh}, using Lemma~\ref{thm.hnf079l} to sum each of the first two terms on the right hand side, we have
\begin{equation}\label{equ.e697qms}
25\sum_{k = 1}^n {F_{mk}{}^4 }  = \frac{{(F_{2mn} L_{2mn + 2m}  + 4( - 1)^{mn - 1} L_m F_{mn}L_{mn + m} )}}{{F_{2m} }} + 6n\,,
\end{equation}
Using the identity~\eqref{equ.yb05ue2} we can write
\begin{equation}\label{equ.evmex9j}
\begin{split}
F_{2mn} L_{2mn + 2m}  &= F_{4mn + 2m}  - F_{2m}\\
&= F_{2mn + m} L_{2mn + m}  - F_{2m}
\end{split}
\end{equation}
and
\begin{equation}\label{equ.rg368e5}
\begin{split}
L_m F_{mn} L_{mn + m}  &= L_m (F_{2mn + m}  - ( - 1)^{mn} F_m )\\
&= L_m F_{2mn + m}  + ( - 1)^{mn - 1} F_{2m}\,.
\end{split}
\end{equation}
Substituting \eqref{equ.evmex9j} and \eqref{equ.rg368e5} into \eqref{equ.e697qms} proves Theorem~\ref{thm.rue8com}.
\end{proof}
\begin{cor}
If $n$ is an integer, then
\[
25\sum_{k=1}^nF_k{}^4=F_{2n+1}L_{n-1}L_{n+2}+6n+3\,.
\]
\end{cor}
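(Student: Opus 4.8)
The plan is to obtain the corollary as the special case $m=1$ of Theorem~\ref{thm.rue8com}. Setting $m=1$ there, and using $F_2=1$ together with $L_1=1$, the right-hand side collapses and the statement becomes $25\sum_{k=1}^n F_k{}^4 = F_{2n+1}\bigl(L_{2n+1}+4(-1)^{n-1}\bigr)+6n+3$. It therefore remains only to verify the purely Lucas identity $L_{2n+1}+4(-1)^{n-1}=L_{n-1}L_{n+2}$, after which substitution finishes the proof.

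For that identity I would invoke~\eqref{equ.ldun001}, namely $L_{u+v}+(-1)^vL_{u-v}=L_vL_u$, with the choice $v=n-1$ and $u=n+2$, so that $u+v=2n+1$ and $u-v=3$. This yields $L_{2n+1}+(-1)^{n-1}L_3=L_{n-1}L_{n+2}$, and since $L_3=4$ the required relation follows immediately. Substituting $L_{2n+1}+4(-1)^{n-1}=L_{n-1}L_{n+2}$ back into the specialized formula gives the claimed closed form $25\sum_{k=1}^nF_k{}^4=F_{2n+1}L_{n-1}L_{n+2}+6n+3$.

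There is essentially no real obstacle here; the only points to watch are the bookkeeping of the sign $(-1)^{mn-1}$ reducing to $(-1)^{n-1}$ when $m=1$, and making the correct parameter choice in~\eqref{equ.ldun001} so that the difference index equals $3$ rather than $2n+1$. One could equally finish by expanding $L_{n-1}L_{n+2}$ through the Binet form, but routing the argument through~\eqref{equ.ldun001} keeps the derivation entirely within the identities already established in the paper.
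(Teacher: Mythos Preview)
Your proof is correct and follows essentially the same route as the paper: specialize Theorem~\ref{thm.rue8com} to $m=1$, then apply identity~\eqref{equ.ldun001} with $u=n+2$, $v=n-1$ (using $L_3=4$) to rewrite $L_{2n+1}+4(-1)^{n-1}$ as $L_{n-1}L_{n+2}$. The paper's argument is identical in substance and in the choice of parameters.
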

\begin{proof}
From Theorem~\ref{thm.rue8com} we have
\begin{equation}
25\sum_{k=1}^nF_k{}^4=F_{2n+1}(L_{2n+1}+4(-1)^{n-1})+6n+3\,.
\end{equation}
From identity~\eqref{equ.ldun001} with $u=n+2$ and $v=n-1$ we have
\begin{equation}
L_{2n+1}+4(-1)^{n-1}=L_{2n+1}+(-1)^{n-1}L_3=L_{n-1}L_{n+2}\,,
\end{equation}
and the result follows.
\end{proof}
\begin{thm}\label{thm.qgnytde}
If $m$ is a non-zero integer and $n$ is any integer, then
\[
\sum_{k = 1}^n {L_{mk}{}^4 }  = \frac{{F_{2mn+m} (L_{2mn + m}  + 4( - 1)^{mn} L_m )}}{{F_{2m} }} + 6n-5\,.
\]
\end{thm}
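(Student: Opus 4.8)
The plan is to mirror the proof of Theorem~\ref{thm.rue8com} almost verbatim, replacing the Fibonacci square identity \eqref{equ.f9c5ppy} by the Lucas square identity \eqref{equ.ewdkjs3}. First I would square \eqref{equ.ewdkjs3}, $L_u{}^2 = L_{2u} + (-1)^u 2$, and then apply \eqref{equ.ewdkjs3} once more with $v = 2u$ (so that $L_{2u}{}^2 = L_{4u} + 2$), to obtain, after setting $u = mk$, the pointwise identity
\[
L_{mk}{}^4 = L_{4mk} + (-1)^{mk} 4 L_{2mk} + 6\,,
\]
the analogue of \eqref{equ.v1jp3hh}. Note the middle term now carries the sign $(-1)^{mk}$ rather than $(-1)^{mk-1}$, and the additive constant is again $6$ (here $L_{2u}{}^2$ contributes $+2$ and the cross term expansion contributes $+4$).

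Next I would sum both sides over $k$ from $1$ to $n$. The constant contributes $6n$. For $\sum_{k=1}^n L_{4mk}$ I would invoke Lemma~\ref{thm.hnf079l} with $m$ replaced by $2m$; since $(-1)^{2mk-1} = -1$ for every $k$ and $(-1)^{2mn-1} = -1$, this yields $F_{2m}\sum_{k=1}^n L_{4mk} = F_{2mn} L_{2mn+2m}$. For $\sum_{k=1}^n (-1)^{mk} L_{2mk}$ I would use Lemma~\ref{thm.hnf079l} directly together with $(-1)^{mk} = -(-1)^{mk-1}$, giving $F_m \sum_{k=1}^n (-1)^{mk} L_{2mk} = (-1)^{mn} F_{mn} L_{mn+m}$. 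Placing the two contributions over the common denominator $F_{2m} = F_m L_m$ produces
\[
\sum_{k = 1}^n {L_{mk}{}^4 } = \frac{F_{2mn} L_{2mn + 2m} + 4(-1)^{mn} L_m F_{mn} L_{mn + m}}{F_{2m}} + 6n\,,
\]
which is the exact counterpart of \eqref{equ.e697qms}.

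Finally I would reuse the two algebraic rewritings already derived inside the proof of Theorem~\ref{thm.rue8com}: identity \eqref{equ.evmex9j}, $F_{2mn} L_{2mn+2m} = F_{2mn+m} L_{2mn+m} - F_{2m}$, and identity \eqref{equ.rg368e5}, $L_m F_{mn} L_{mn+m} = L_m F_{2mn+m} + (-1)^{mn-1} F_{2m}$. Substituting both into the numerator, the $-F_{2m}$ from the first and the term $4(-1)^{mn}(-1)^{mn-1}F_{2m} = -4F_{2m}$ from the second combine to $-5F_{2m}$, while the remaining terms assemble into $F_{2mn+m}\bigl(L_{2mn+m} + 4(-1)^{mn}L_m\bigr)$. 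Dividing by $F_{2m}$ and restoring the $+6n$ gives precisely the asserted formula, with the constant $-5$ in place of the $+3$ of Theorem~\ref{thm.rue8com}.

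I do not expect any genuine obstacle: the whole argument is a sign-bookkeeping exercise running parallel to the preceding theorem, so the only care needed is to track the systematic shift $(-1)^{mk-1} \mapsto (-1)^{mk}$ through Lemma~\ref{thm.hnf079l} and the collapse $(-1)^{2mn-1} = -1$. As a shortcut one may instead subtract the two pointwise identities to get $L_{mk}{}^4 - 25F_{mk}{}^4 = 8(-1)^{mk} L_{2mk}$, sum this using Lemma~\ref{thm.hnf079l}, simplify the right-hand side via \eqref{equ.rg368e5}, and read off the result directly from Theorem~\ref{thm.rue8com}.
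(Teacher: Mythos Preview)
Your proposal is correct and follows essentially the same approach as the paper: derive the pointwise identity $L_{mk}{}^4=L_{4mk}+4(-1)^{mk}L_{2mk}+6$ by squaring~\eqref{equ.ewdkjs3}, sum with Lemma~\ref{thm.hnf079l}, and then reuse the simplifications~\eqref{equ.evmex9j} and~\eqref{equ.rg368e5} from the proof of Theorem~\ref{thm.rue8com}. The paper's own proof is terser (it only states the pointwise identity and refers back implicitly), but your detailed sign bookkeeping and the final collapse to $-5F_{2m}$ are all correct.
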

\begin{proof}
The theorem is proved by summing both sides of the following identity,
\begin{equation}\label{equ.t6kphab}
L_{mk}{}^4=L_{4mk}-(-1)^{mk-1}4L_{2mk}+6\,,
\end{equation}
applying Lemma~\ref{thm.hnf079l} to sum each of the first two terms on the right hand side.
Identity~\eqref{equ.t6kphab} is obtained by squaring identity~\eqref{equ.ewdkjs3} and finally setting $v=mk$.
\end{proof}
\begin{cor}
If $n$ is an integer, then
\[
\sum_{k=1}^nL_k{}^4=5F_{2n+1}F_{n-1}F_{n+2}+6n-5\,.
\]
\end{cor}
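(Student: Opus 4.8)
The plan is to specialize Theorem~\ref{thm.qgnytde} to the case $m=1$ and then recognize the resulting Lucas expression as a product of Fibonacci numbers, exactly paralleling the proof of the corollary to Theorem~\ref{thm.rue8com}. Setting $m=1$ and using $F_2=1$ and $L_1=1$, Theorem~\ref{thm.qgnytde} gives immediately
\[
\sum_{k=1}^n L_k{}^4 = F_{2n+1}\bigl(L_{2n+1} + 4(-1)^n\bigr) + 6n - 5\,,
\]
so it remains only to prove the identity $L_{2n+1} + 4(-1)^n = 5F_{n-1}F_{n+2}$, and then substitute.

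For this last step I would proceed as follows. Since $F_{n+2}=F_{n+1}+F_n$ and $F_{n-1}=F_{n+1}-F_n$, the Fibonacci recurrence yields the difference of squares $F_{n-1}F_{n+2}=F_{n+1}{}^2-F_n{}^2$. Applying identity~\eqref{equ.f9c5ppy} with $u=n+1$ and then with $u=n$ gives
\[
5\bigl(F_{n+1}{}^2-F_n{}^2\bigr) = \bigl(L_{2n+2}-(-1)^{n+1}2\bigr) - \bigl(L_{2n}-(-1)^n2\bigr) = L_{2n+2}-L_{2n}+4(-1)^n\,,
\]
and since $L_{2n+2}-L_{2n}=L_{2n+1}$ by the Lucas recurrence, we obtain $5F_{n-1}F_{n+2}=L_{2n+1}+4(-1)^n$, as required. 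Alternatively, one could invoke the product formula $5F_uF_v=L_{u+v}-(-1)^vL_{u-v}$ with $u=n+2$, $v=n-1$, mirroring the use of identity~\eqref{equ.ldun001} in the previous corollary; the difference-of-squares route is preferable here only because it stays within the identities already displayed in the paper.

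There is no genuine obstacle: the argument is a routine specialization, and the one point requiring a moment's thought is spotting the correct factorization $L_{2n+1}+4(-1)^n=5F_{n-1}F_{n+2}$, which is forced by analogy with the $F_k{}^4$ corollary, where the sign change produces $L_{2n+1}+4(-1)^{n-1}=L_{n-1}L_{n+2}$ instead. Substituting the factorization back into the displayed expression for $\sum_{k=1}^n L_k{}^4$ completes the proof.
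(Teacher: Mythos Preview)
Your proposal is correct and follows essentially the same route as the paper: specialize Theorem~\ref{thm.qgnytde} to $m=1$ and then factor $L_{2n+1}+4(-1)^n$ as $5F_{n-1}F_{n+2}$. The only difference is that the paper obtains this factorization in one line by applying identity~\eqref{equ.sz2doji} with $u=n+2$, $v=n-1$ (the very alternative you mention), whereas you derive it from~\eqref{equ.f9c5ppy} via the difference of squares; both are valid and equally short.
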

\begin{proof}
From Theorem~\ref{thm.qgnytde} we have
\begin{equation}
\sum_{k=1}^nL_k{}^4=F_{2n+1}(L_{2n+1}-4(-1)^{n-1})+6n-5\,.
\end{equation}
From identity~\eqref{equ.sz2doji} with $u=n+2$ and $v=n-1$ we have
\begin{equation}
L_{2n+1}-4(-1)^{n-1}=L_{2n+1}-(-1)^{n-1}L_3=5F_{n-1}F_{n+2}\,,
\end{equation}
and the result follows.
\end{proof}
\subsection{Alternating sums}
\begin{thm}\label{thm.x8sbxd0}
If $m$ and $n$ are integers, then
\[
\sum_{k = 1}^n {( - 1)^{k - 1} F_{mk}{}^4 }  = \frac{{F_{mn} F_{mn + m} \left\{ (-1)^{n-1}L_m L_{mn} L_{mn + m}  + (-1)^{n(m-1)}4L_{2m} \right\}}}{{5L_m L_{2m} }}\,.
\]
\end{thm}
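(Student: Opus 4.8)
The plan is to mirror the strategy used for Theorem~\ref{thm.rue8com}, but with the alternating telescoping machinery (Lemmas~\ref{finqodd}, \ref{thm.hnf079l} and \ref{thm.njcpb71}) doing the work instead of the plain telescoping sum. First I would start from identity~\eqref{equ.v1jp3hh}, namely $25F_{mk}{}^4 = L_{4mk} + (-1)^{mk-1}4L_{2mk} + 6$, multiply through by $(-1)^{k-1}$, and sum from $k=1$ to $n$. The constant term $6$ contributes $6\sum_{k=1}^n(-1)^{k-1} = 3(1-(-1)^n)$, which is $0$ when $n$ is even and $6$ when $n$ is odd; I expect this to be absorbed cleanly into the final factored form (it should vanish against the $F_{mn}F_{mn+m}$ prefactor exactly when $n$ is even, since $F_{mn}=0$ forces nothing — more precisely the constant has to be reconciled with the closed form, and checking that reconciliation is one of the delicate bookkeeping points).

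The two genuinely substantive terms are $\sum_{k=1}^n(-1)^{k-1}L_{4mk}$ and $\sum_{k=1}^n(-1)^{k-1}(-1)^{mk-1}L_{2mk}$. For the first, I would apply Lemma~\ref{thm.hnf079l} (or rather its proof technique) after splitting on the parity of $m$: when $m$ is even, $(-1)^{mk-1} = -1$, so the second sum is just $-\sum(-1)^{k-1}L_{2mk}$, handled directly by the alternating case of the telescoping identity; when $m$ is odd, $(-1)^{mk-1} = (-1)^{k-1}$, so $(-1)^{k-1}(-1)^{mk-1} = 1$ and the second sum becomes a \emph{non}-alternating $\sum L_{2mk}$, which is again covered. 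For the $L_{4mk}$ term I would similarly need a closed form for $\sum_{k=1}^n(-1)^{k-1}L_{4mk}$; this follows by the same argument as Lemma~\ref{thm.hnf079l} with $2m$ in place of $m$ — i.e. using $F_{2m}\sum(-1)^{k-1}L_{4mk}$-type telescoping via \eqref{equ.gimtkiy}/\eqref{equ.nmx8lu8} at doubled index — yielding something proportional to $F_{2mn}L_{2mn+2m}/F_{4m}$ or the alternating analogue, depending on the parity of $2m$ (always even, so one is in the non-alternating telescoping case here). I would then rewrite $F_{2mn}L_{2mn+2m} = F_{2mn+m}L_{2mn+m} - F_{2m}$ exactly as in \eqref{equ.evmex9j}, and convert $F_{2mn+m}$ and the various products back into the symmetric form $F_{mn}F_{mn+m}\cdot(\text{stuff})$ using \eqref{equ.yb05ue2} and \eqref{equ.ldun001} run the other direction.

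The main obstacle will be the parity case-split and the subsequent reunification: the sign $(-1)^{mn-1}$ versus $(-1)^{n-1}$ versus $(-1)^{n(m-1)}$ all coincide or differ depending on the parities of $m$ and $n$, and the claimed closed form packages them into a single expression with $(-1)^{n-1}$ and $(-1)^{n(m-1)}$. Verifying that the two parity branches (m even / m odd), each possibly further split on n, collapse to the one stated formula — including the disappearance of the additive constant $3(1-(-1)^n)$ into the factored numerator — is the step that requires care. I would organize this by first deriving the result in the form analogous to \eqref{equ.e697qms} (an expression over $F_{2m}$, or here over $L_mL_{2m}$ after clearing the relevant Fibonacci/Lucas conversion factors $5F_u{}^2 = L_{2u}-(-1)^u2$ and $L_v{}^2 = L_{2v}+(-1)^v2$), and only at the very end performing the product conversions $F_{2mn+m} = F_{mn}L_{mn+m} + (-1)^{mn}F_m$ and its Lucas partner to reach the symmetric factored form, checking the constant reconciliation as the last line.
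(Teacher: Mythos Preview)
Your overall architecture is right and matches the paper: start from \eqref{equ.v1jp3hh}, multiply by $(-1)^{k-1}$, sum, and evaluate the two Lucas sums. The handling of the second sum is also fine, since $(-1)^{k-1}(-1)^{mk-1}=(-1)^{k(m-1)}$ is exactly the sign pattern of Lemma~\ref{thm.njcpb71}, and your parity split just reproduces that lemma's two cases. The gap is in the first sum, $\sum_{k=1}^n(-1)^{k-1}L_{4mk}$. You propose ``Lemma~\ref{thm.hnf079l} with $2m$ in place of $m$'', i.e.\ the Fibonacci-prefactor route via \eqref{equ.gimtkiy}/\eqref{equ.nmx8lu8}. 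But with $2m$ in place of $m$ that lemma reads $F_{2m}\sum_{k}(-1)^{2mk-1}L_{4mk}=(-1)^{2mn-1}F_{2mn}L_{2mn+2m}$, and since $2mk$ is even this collapses to the \emph{non}-alternating sum $\sum_k L_{4mk}$, not the alternating one you need. Equivalently, at doubled index you always land in the difference identity $F_{4mk+2m}-F_{4mk-2m}=F_{2m}L_{4mk}$, which telescopes under Lemma~\ref{finall} but not under Lemma~\ref{finqodd}. So the $F_{2m}$- or $F_{4m}$-denominator you anticipate cannot arise.

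The fix is to use Lemma~\ref{thm.njcpb71} at $2m$ instead: since $2m-1$ is always odd, it gives precisely $L_{2m}\sum_{k=1}^n(-1)^{k-1}L_{4mk}=(-1)^{n-1}L_{4mn+2m}-L_{2m}$. This is why the final denominator is $L_mL_{2m}$, not anything involving $F_{2m}$. With both sums coming from Lemma~\ref{thm.njcpb71}, the additive constants $+1$, $-4$, and $3\{(-1)^{n-1}+1\}$ combine to $3(-1)^{n-1}$, which is then reabsorbed by writing the two numerators as $L_{4mn+2m}-L_{2m}$ and $L_{2mn+m}-(-1)^{mn}L_m$; identity~\eqref{equ.sz2doji} converts each of these to a multiple of $5F_{mn}F_{mn+m}$ (using \eqref{equ.yc4034r} for the first), and the common factor drops out. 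Your proposed final conversions via \eqref{equ.yb05ue2} and the expansion $F_{2mn+m}=F_{mn}L_{mn+m}+(-1)^{mn}F_m$ are headed in the wrong direction because they presuppose the Fibonacci-denominator form that never materializes.
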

\begin{proof}
Multiplying through identity~\eqref{equ.v1jp3hh} by $(-1)^{k-1}$ and summing over $k$, we have the identity
\begin{equation}\label{equ.gf4l6f1}
\begin{split}
25\sum_{k = 1}^n {( - 1)^{k - 1} F_{mk}{}^4 }  &= \sum_{k = 1}^n {( - 1)^{k - 1} L_{4mk} }\\
&\qquad  + 4\sum_{k = 1}^n {( - 1)^{k(m - 1)} L_{2mk} }  + 3(( - 1)^{n - 1}  + 1)\,.
\end{split}
\end{equation}
When Lemma~\ref{thm.njcpb71} is used to evaluate the sums on the right hand side we have
\begin{equation}
\begin{split}
25\sum_{k = 1}^n {( - 1)^{k - 1} F_{mk}{}^4 }  &= \frac{{( - 1)^{n - 1} L_{4mn + 2m}  + L_{2m} }}{{L_{2m} }}\\
&\qquad + \frac{{4\left\{ {( - 1)^{n(m - 1)} L_{2mn + m}  - L_m } \right\}}}{{L_m }}\\
&\qquad + 3\left\{ {( - 1)^{n - 1}  + 1} \right\}\,,
\end{split}
\end{equation}
that is,
\begin{equation}\label{equ.rajm5ce}
\begin{split}
25\sum_{k = 1}^n {( - 1)^{k - 1} F_{mk}{}^4 }  &= \frac{{( - 1)^{n - 1} L_{4mn + 2m} }}{{L_{2m} }} + \frac{{4( - 1)^{n(m - 1)} L_{2mn + m} }}{{L_m }} + 3( - 1)^{n - 1}\\ 
&= \frac{{( - 1)^{n - 1} \left\{ {L_{4mn + 2m}  - L_{2m} } \right\}}}{{L_{2m} }}\\
&\qquad + \frac{{4( - 1)^{n(m - 1)} \left\{L_{2mn + m}  - ( - 1)^{mn} L_m\right\} }}{{L_m }}\,.
\end{split}
\end{equation}
Theorem~\ref{thm.x8sbxd0} then follows when the identities 
\begin{equation}\label{equ.sz2doji}
L_{u + v} - (-1)^vL_{u-v}=5F_vF_u  
\end{equation}
and
\begin{equation}\label{equ.yc4034r}
F_{2u}=F_uL_u
\end{equation}
are used to write the right hand side of~\eqref{equ.rajm5ce}.
\end{proof}
\begin{cor}
If $n$ is an integer, then
\[
\sum_{k=1}^n(-1)^{k-1}F_k{}^4=\frac{(-1)^{n-1}}3F_nF_{n+1}F_{n-2}F_{n+3}\,.
\]
\end{cor}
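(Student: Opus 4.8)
The plan is to obtain this corollary by specializing Theorem~\ref{thm.x8sbxd0} to $m = 1$ and simplifying the resulting bracketed expression. Setting $m = 1$, and using $L_1 = 1$, $L_2 = 3$, and $(-1)^{n(m-1)} = (-1)^0 = 1$, Theorem~\ref{thm.x8sbxd0} yields
\[
\sum_{k=1}^n (-1)^{k-1} F_k{}^4 = \frac{F_n F_{n+1}\left\{ (-1)^{n-1} L_n L_{n+1} + 12 \right\}}{15}\,.
\]
Comparing this with the claimed closed form, I see it is enough to prove the auxiliary identity $(-1)^{n-1} L_n L_{n+1} + 12 = 5(-1)^{n-1} F_{n-2} F_{n+3}$, which, after multiplying through by $(-1)^{n-1}$, is equivalent to
\[
L_n L_{n+1} + 12(-1)^{n-1} = 5 F_{n-2} F_{n+3}\,.
\]

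To verify this I would rewrite both products as single Lucas numbers using the product-to-sum formulas already available. Identity~\eqref{equ.ldun001} with $u = n+1$, $v = n$ gives $L_n L_{n+1} = L_{2n+1} + (-1)^n L_1 = L_{2n+1} + (-1)^n$, and identity~\eqref{equ.sz2doji} with $u = n+3$, $v = n-2$ gives $5 F_{n-2} F_{n+3} = L_{2n+1} - (-1)^{n-2} L_5 = L_{2n+1} - 11(-1)^n$, since $L_5 = 11$ and $(-1)^{n-2} = (-1)^n$. Substituting these and noting $12(-1)^{n-1} = -12(-1)^n$, the left-hand side becomes $L_{2n+1} + (-1)^n - 12(-1)^n = L_{2n+1} - 11(-1)^n$, which is exactly the right-hand side, so the auxiliary identity, and hence the corollary, follows.

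I do not anticipate any real difficulty here: the argument is a short computation once the correct instances of \eqref{equ.ldun001} and \eqref{equ.sz2doji} are chosen. The only points to watch are the sign bookkeeping $(-1)^{n-2} = (-1)^n$ when applying \eqref{equ.sz2doji}, and the observation that cancelling the common factor $F_n F_{n+1}$ from both sides is harmless because what remains is an identity valid for every integer $n$ (and holds trivially at the finitely many places where $F_n F_{n+1}$ vanishes).
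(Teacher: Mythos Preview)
Your proof is correct and follows essentially the same route as the paper: specialize Theorem~\ref{thm.x8sbxd0} to $m=1$, then reduce both $L_nL_{n+1}$ and $5F_{n-2}F_{n+3}$ to expressions in $L_{2n+1}$ via the product-to-sum identities and compare. The only cosmetic difference is that the paper writes the constant $12$ as $L_2L_3$ and factors out $(-1)^{n-1}$ before applying~\eqref{equ.sz2doji}, whereas you verify the auxiliary identity directly; your explicit citation of~\eqref{equ.ldun001} for the Lucas product is in fact cleaner than the paper's (slightly mislabeled) reference.
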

\begin{proof}
From Theorem~\ref{thm.x8sbxd0}
\begin{equation}
\sum_{k=1}^n(-1)^{k-1}F_k{}^4=\frac{F_nF_{n+1}(( - 1)^{n - 1}L_nL_{n+1}+L_2L_3)}{15}\,.
\end{equation}
From identity~\eqref{equ.sz2doji}
\begin{equation}\label{equ.s69j9wb}
L_nL_{n+1}=L_{2n+1}-(-1)^{n-1}\,,\quad L_2L_3=L_5+1\,.
\end{equation}
We therefore have
\[
\begin{split}
\sum_{k = 1}^n {( - 1)^{k - 1} F_k{}^4 }  &= \frac{{( - 1)^{n - 1} F_n F_{n + 1} (L_{2n + 1}  + ( - 1)^{n - 1} L_5 )}}{{15}}\\
&=\frac{{( - 1)^{n - 1} F_n F_{n + 1} (L_{2n + 1}  - ( - 1)^{n - 2} L_5 )}}{{15}}\\
&=\frac{(-1)^{n-1}}3F_nF_{n+1}F_{n-2}F_{n+3}\,,\quad\mbox{by identity~\eqref{equ.sz2doji}.}
\end{split}
\]

\end{proof}
\begin{thm}\label{thm.rthrjbn}
If $m$ and $n$ are integers, then
\[
\sum_{k = (1+(-1)^n)/2}^n {( - 1)^{k - 1} L_{mk}{}^4 }  = \frac{{( - 1)^{n - 1}5F_{mn} F_{mn + m} \left\{ L_m L_{mn} L_{mn + m}  + ( - 1)^{nm}4L_{2m} \right\}}}{{L_m L_{2m} }}\,.
\]
\end{thm}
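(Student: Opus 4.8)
The plan is to follow the pattern of the proof of Theorem~\ref{thm.x8sbxd0}, but starting from the Lucas identity~\eqref{equ.t6kphab}. Multiplying $L_{mk}{}^4=L_{4mk}-(-1)^{mk-1}4L_{2mk}+6$ through by $(-1)^{k-1}$, using $(-1)^{k-1}(-1)^{mk-1}=(-1)^{k(m-1)}$, and summing $k$ from $a:=(1+(-1)^n)/2$ up to $n$, I obtain
\[
\sum_{k=a}^n(-1)^{k-1}L_{mk}{}^4=\sum_{k=a}^n(-1)^{k-1}L_{4mk}-4\sum_{k=a}^n(-1)^{k(m-1)}L_{2mk}+6\sum_{k=a}^n(-1)^{k-1}\,.
\]
The role of the unusual lower limit is now clear: $a=1$ when $n$ is even and $a=0$ when $n$ is odd, and in either case $\sum_{k=a}^n(-1)^{k-1}=0$. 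This is precisely what removes the residual term $3((-1)^{n-1}+1)$ that survives in Theorem~\ref{thm.x8sbxd0}, so that the $L$-sum carries no additive polynomial part.

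It then remains to evaluate the first two sums by means of Lemma~\ref{thm.njcpb71}. The sum $\sum(-1)^{k(m-1)}L_{2mk}$ is covered directly by that lemma; the sum $\sum(-1)^{k-1}L_{4mk}$ is covered by replacing $m$ with $2m$ in the lemma, after noting $(-1)^{k(2m-1)}=(-1)^k=-(-1)^{k-1}$ and $(-1)^{n(2m-1)}=(-1)^n$. Since the index may start at $k=0$ (when $n$ is odd), I would first record the trivial extension of Lemma~\ref{thm.njcpb71} got by adjoining the term $L_0=2$; the even and odd cases then merge into the single statement
\[
L_m\sum_{k=a}^n(-1)^{k(m-1)}L_{2mk}=(-1)^{n(m-1)}L_{2mn+m}-(-1)^nL_m\,,
\]
and, with $m\mapsto 2m$,
\[
L_{2m}\sum_{k=a}^n(-1)^{k-1}L_{4mk}=(-1)^{n-1}\bigl(L_{4mn+2m}-L_{2m}\bigr)\,.
\]

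Substituting these into the expansion above, multiplying through by $L_mL_{2m}$, and tidying the signs with $(-1)^{n(m-1)}=-(-1)^{n-1}(-1)^{mn}$, the theorem reduces to the Lucas--Fibonacci identity
\[
L_mL_{4mn+2m}+4(-1)^{mn}L_{2m}L_{2mn+m}-5L_mL_{2m}=5F_{mn}F_{mn+m}\bigl(L_mL_{mn}L_{mn+m}+4(-1)^{mn}L_{2m}\bigr)\,.
\]
This I would settle using identity~\eqref{equ.sz2doji} in the two forms $L_{4mn+2m}-L_{2m}=5F_{2mn}F_{2mn+2m}$ (take $u=2mn+2m$, $v=2mn$) and $L_{2mn+m}-(-1)^{mn}L_m=5F_{mn}F_{mn+m}$ (take $u=mn+m$, $v=mn$), together with $F_{2u}=F_uL_u$ from~\eqref{equ.yc4034r} to collapse $F_{2mn}F_{2mn+2m}=F_{mn}F_{mn+m}L_{mn}L_{mn+m}$.

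I expect the only genuine difficulty to be bookkeeping: carrying the parity-dependent lower limit through all three sums, and keeping the exponents of $-1$ straight through the chain $(-1)^{k-1}(-1)^{mk-1}=(-1)^{k(m-1)}$, $(-1)^{k(2m-1)}=(-1)^k$, and $(-1)^{n(m-1)}=-(-1)^{n-1}(-1)^{mn}$. Once the two inner sums are in the closed forms displayed above, what is left is the same routine Lucas-to-Fibonacci conversion already used in Theorems~\ref{thm.rue8com}--\ref{thm.x8sbxd0}.
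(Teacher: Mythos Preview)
Your proposal is correct and follows essentially the same route as the paper: start from identity~\eqref{equ.t6kphab}, multiply by $(-1)^{k-1}$, sum, evaluate the two Lucas sums via Lemma~\ref{thm.njcpb71} (and its $m\mapsto 2m$ instance), and finish with identities~\eqref{equ.sz2doji} and~\eqref{equ.yc4034r}. The only organizational difference is that the paper sums from $k=1$, obtains a residual $3(-1)^{n-1}+8$, and then splits into the cases $n$ even and $n$ odd (absorbing the extra $16$ in the odd case by adjoining the $k=0$ term), whereas you build the lower limit $a=(1+(-1)^n)/2$ in from the start so that the constant sum vanishes and a single unified formula covers both parities---a slightly cleaner bookkeeping of the same argument.
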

\begin{proof}
Multiplying through identity~\eqref{equ.t6kphab} by $(-1)^{k-1}$ and summing over $k$, we have the identity
\begin{equation}\label{equ.gf4l6f1}
\begin{split}
\sum_{k = 1}^n {( - 1)^{k - 1} L_{mk}{}^4 }  &= \sum_{k = 1}^n {( - 1)^{k - 1} L_{4mk} }\\
&\qquad  - 4\sum_{k = 1}^n {( - 1)^{k(m - 1)} L_{2mk} }  + 3(( - 1)^{n - 1}  + 1)\,,
\end{split}
\end{equation}
which by the use of Lemma~\ref{thm.njcpb71} gives
\[
\sum_{k = 1}^n {( - 1)^{k - 1} L_{mk}{}^4 }  = \frac{{( - 1)^{n - 1} L_{4mn + 2m} }}{{L_{2m} }} - \frac{{4( - 1)^{n(m - 1)} L_{2mn + m} }}{{L_m }} + 3( - 1)^{n - 1}+8\,,
\]
so that if $n$ is even we have
\begin{equation}\label{equ.udj4uic}
\sum_{k = 1}^n {( - 1)^{k - 1} L_{mk}{}^4 }  =  - \frac{{(L_{4mn + 2m}  - L_{2m} )}}{{L_{2m} }} - \frac{{4(L_{2mn + m}  - L_m )}}{{L_m }}\,,
\end{equation}
while if $n$ is odd we have
\[
\begin{split}
\sum_{k = 1}^n {( - 1)^{k - 1} L_{mk}{}^4 }  &=  - \frac{{(L_{4mn + 2m}  - L_{2m} )}}{{L_{2m} }}\\
&\qquad - \frac{{4( - 1)^{m - 1} (L_{2mn + m}  - ( - 1)^m L_m )}}{{L_m }} + 16\,,
\end{split}
\]
that is,
\begin{equation}\label{equ.ddszwu4}
\begin{split}
\sum_{k = 0}^n {( - 1)^{k - 1} L_{mk}{}^4 }  &=  - \frac{{(L_{4mn + 2m}  - L_{2m} )}}{{L_{2m} }}\\
&\qquad - \frac{{4( - 1)^{m - 1} (L_{2mn + m}  - ( - 1)^m L_m )}}{{L_m }}\,.
\end{split}
\end{equation}
Using identities~\eqref{equ.sz2doji} and~\eqref{equ.yc4034r} to write the right side of identities~\eqref{equ.udj4uic} and \eqref{equ.ddszwu4} and combining the results we obtain the statement of Theorem~\ref{thm.rthrjbn}.
\end{proof}
\begin{cor}
If $n$ is an integer, then
\[
\sum_{k = (1 + ( - 1)^n )/2}^n {( - 1)^{k - 1} L_k^4 }  = ( - 1)^{n - 1} \frac{5}{3}F_n F_{n + 1} (L_{n - 2} L_{n + 3}  + ( - 1)^n 2)\,.
\]

\end{cor}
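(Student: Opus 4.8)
The plan is to specialize Theorem~\ref{thm.rthrjbn} to $m=1$ and then rewrite the resulting closed form. Putting $m=1$ and using $L_1=1$ and $L_2=3$, Theorem~\ref{thm.rthrjbn} gives at once
\[
\sum_{k=(1+(-1)^n)/2}^n(-1)^{k-1}L_k^4=\frac{(-1)^{n-1}5F_nF_{n+1}\bigl(L_nL_{n+1}+12(-1)^n\bigr)}{3}\,,
\]
so the task reduces to the elementary identity $L_nL_{n+1}+12(-1)^n=L_{n-2}L_{n+3}+2(-1)^n$, equivalently $L_{n-2}L_{n+3}-L_nL_{n+1}=10(-1)^n$.

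To prove this I would apply the product identity~\eqref{equ.ldun001}, $L_{u+v}+(-1)^vL_{u-v}=L_vL_u$, twice. With $(u,v)=(n+1,n)$ it yields $L_nL_{n+1}=L_{2n+1}+(-1)^nL_1=L_{2n+1}+(-1)^n$ (this is already recorded in~\eqref{equ.s69j9wb}), and with $(u,v)=(n+3,n-2)$ it yields $L_{n-2}L_{n+3}=L_{2n+1}+(-1)^{n-2}L_5=L_{2n+1}+11(-1)^n$, using $L_5=11$ and $(-1)^{n-2}=(-1)^n$. Subtracting these two gives $L_{n-2}L_{n+3}-L_nL_{n+1}=10(-1)^n$, as needed.

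Substituting $L_nL_{n+1}+12(-1)^n=L_{n-2}L_{n+3}+2(-1)^n$ into the displayed formula then produces the stated expression $(-1)^{n-1}\frac53F_nF_{n+1}\bigl(L_{n-2}L_{n+3}+(-1)^n2\bigr)$. There is no genuine obstacle here; the only points needing a moment's care are the parity simplification $(-1)^{n-2}=(-1)^n$ and recalling the small Lucas value $L_5=11$, both routine. A direct Binet-form computation or a short two-step induction would also settle $L_{n-2}L_{n+3}-L_nL_{n+1}=10(-1)^n$, but routing through~\eqref{equ.ldun001} is the shortest path and mirrors the proofs of the earlier corollaries.
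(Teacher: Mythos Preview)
Your proof is correct. The paper actually omits the proof of this final corollary, but your argument---specialize Theorem~\ref{thm.rthrjbn} to $m=1$ and then simplify the bracket via identity~\eqref{equ.ldun001}---is exactly parallel to the paper's proofs of the earlier corollaries (compare the use of~\eqref{equ.s69j9wb} in the corollary to Theorem~\ref{thm.x8sbxd0}), so this is precisely the intended route.
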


\end{document}